\documentclass[12pt]{amsart}
\usepackage{etex}

\usepackage{amsthm,amssymb,enumitem,mathtools}

\usepackage{tikz}

\usepackage{scalerel}

\usetikzlibrary{arrows,shapes,automata,backgrounds,decorations,petri,positioning}
\usetikzlibrary[calc,intersections,through,backgrounds,arrows,decorations.pathmorphing]

\tikzstyle{edge} = [fill,opacity=.5,fill opacity=.5,line cap=round, line join=round, line width=50pt]

\pgfdeclarelayer{background}
\pgfsetlayers{background,main}

\usepackage[margin=1in,letterpaper,portrait]{geometry}

\setlength{\parindent}{.2in}

\usepackage{amsthm}

\usepackage[latin1]{inputenc}
\usepackage{subfigure}
\usepackage{color}
\usepackage{amsmath}
\usepackage{amsthm}
\usepackage{amstext}
\usepackage{amssymb}
\usepackage{amsfonts}
\usepackage{graphicx}
\usepackage{young}
\usepackage{multicol}
\usepackage{mathrsfs}
\usepackage[all]{xy}
\usepackage{tikz}
\usepackage{mathtools}
\usepackage{tabularx}
\usepackage{array}
\usepackage{commath}
\usepackage{ytableau}

\usepackage{scrextend}

\usepackage{hyperref}

\theoremstyle{plain}
\theoremstyle{definition}
\newtheorem{theorem}{Theorem}[section]

\newtheorem{lemma}[theorem]{Lemma}

\newtheorem{definition}[theorem]{Definition}

\newtheorem{example}[theorem]{Example}
\newtheorem{proposition}[theorem]{Proposition}
\newtheorem{corollary}[theorem]{Corollary}

\DeclareMathAlphabet{\mathpzc}{OT1}{pzc}{m}{it}

\newcommand{\symm}{\mathfrak{S}}

\newcommand{\ol}[1]{\mbox{$\overline{#1}$}}
\usepackage{mathabx}
\renewcommand{\star}{\mbox{$\Asterisk$}}
\newcommand{\ncp}{\textsf{L-NC}}
\newcommand{\ncpn}{\textsf{L-NCN}}
\newcommand{\SF}{\textsf{SF}}
\newcommand{\Shift}{\textsf{Shift}}
\newcommand{\Star}{\textsf{Star}}
\newcommand{\NC}{\text{NC}}

\newcommand*\circled[1]{\tikz[baseline=(char.base)]{
            \node[shape=circle,draw,inner sep=2pt] (char) {#1};}}

\begin{document}

\title{Star factorizations and noncrossing partitions}

\date{}

\author{Bridget Eileen Tenner}
\address{Department of Mathematical Sciences, DePaul University, Chicago, IL, USA}
\email{bridget@math.depaul.edu}
\thanks{Research partially supported by Simons Foundation Collaboration Grant for Mathematicians 277603 and by a University Research Council Competitive Research Leave from DePaul University.}

\keywords{}%

\subjclass[2010]{Primary: 05A05; 
Secondary: 05A18, 
05A19
}

\begin{abstract}
We develop the relationship between minimal transitive star factorizations and noncrossing partitions. This gives a new combinatorial proof of a result by Irving and Rattan, and a specialization of a result of Kreweras. It also arises in a poset on the symmetric group whose definition is motivated by the Subword Property of the Bruhat order.
\end{abstract}

\maketitle

\section{Introduction}

Let $\symm_n$ be the symmetric group on $[n] := \{1,\ldots, n\}$. For any $k \in [n]$, called the \emph{pivot}, the group $\symm_n$ is generated by the transpositions
$$\star_{n;k} := \big\{ (k \ i) : i \in [n] \setminus \{k\}\big\},$$
which we multiply from right to left. For $\pi \in \symm_n$, a decomposition $\pi = g_1 \cdots g_r$ for $g_i \in \star_{n;k}$ is a \emph{star factorization} of $\pi$. Following terminology of Goulden and Jackson \cite{goulden jackson} and Irving and Rattan \cite{irving rattan}, we say that this star factorization is \emph{transitive} if $\{g_1,\ldots,g_r\} = \star_{n;k}$. A \emph{minimal} transitive star factorization refers to minimality of the length $r$. If $\pi \in \symm_n$ has $m$ disjoint cycles, then a minimal transitive star factorization of $\pi$ has length $n + m - 2$ \cite{irving rattan, pak}. Let
$$\star_k(\pi)$$
denote the minimal transitive star factorizations of $\pi$ with pivot $k$. Throughout this work, all ``star factorizations'' are assumed to be minimal transitive star factorizations.

Irving and Rattan showed in \cite{irving rattan} that the number of star factorizations of a permutation $\pi \in \symm_n$ depended only on the cycle type of $\pi$: if $\pi$ 
has cycles of lengths $\ell_1$, \ldots, $\ell_m$, then
\begin{equation}\label{eqn:irving rattan}
|\star_k(\pi)| = \frac{(n+m-2)!}{n!} \cdot \ell_1\cdots \ell_m.
\end{equation}
(When $m>1$, we can write this as $(n+m-2)_{m-2}\ell_1\cdots \ell_m$, where $(a)_{b}$ is the falling factorial.) Perhaps most striking about this result is its independence of the pivot $k$. In \cite{tenner star}, we gave a combinatorial proof of that independence. Goulden and Jackson extended this independence to non-minimal factorizations in \cite{goulden jackson 2009}.

In this note, we give a new combinatorial proof of Equation~\eqref{eqn:irving rattan}, based on an interpretation of star factorizations in terms of noncrossing partitions. This is indicative of a close relationship between these two classes of objects, which can be used to prove/recover several interesting results. First, using the enumeration given by \cite{irving rattan} in Equation~\eqref{eqn:irving rattan}, we recover a result of Kreweras about enumerating noncrossing partitions with specified (labeled) part sizes \cite{kreweras}. This enumeration turns out to be independent of the part sizes themselves, and depends only on how many parts there are. Second, we show that in a Bruhat-style poset defined in terms of the minimal transitive star factorizations of elements of $\symm_n$, the poset's intervals are built out of noncrossing partition posets. Once again, the pivot value is immaterial, and the posets defined in this way for two different pivots are equal.

This interplay between star factorizations and noncrossing partitions not only highlights their connection, but also a notion of ``independent of $\ldots$'' that certain features of each may share.

Section~\ref{sec:star factorizations as ncp} reviews the main results of Irving and Rattan. These results are used to establish the relationship between star factorizations and labeled noncrossing partitions in Proposition~\ref{prop:valid noncrossing} and Theorem~\ref{thm:sf is a bijection}. This yields that the number of labeled noncrossing partitions of $X$ into $m$ parts of specified sizes is the falling factorial $(X)_{m-1}$ in Corollary~\ref{cor:counting ncp}. Section~\ref{sec:pivot independence} revisits the enumeration of \cite{irving rattan} and provides a new combinatorial proof of that result in Corollary~\ref{cor:irving rattan bijection}. In Section~\ref{sec:poset}, we define a poset on $\symm_n$ using star factorizations, motivated by the Subword Property of the Bruhat order. This is equivalent to a poset that can be defined in terms of permutation cycles (Theorem~\ref{thm:poset is cycles}), and is thus yet another example of pivot independence arising in a property defined by star factorizations (Corollary~\ref{cor:poset pivot independence}). The relationship between this poset and noncrossing partitions is discussed in Lemma~\ref{lem:single sliced cycle} and Corollary~\ref{cor:intervals are products of NC}, and further elucidated in examples and discussions in that section. We conclude with two open questions in Section~\ref{sec:future}.

\section{Star factorizations as labeled noncrossing partitions}\label{sec:star factorizations as ncp}

Before envisioning star factorizations as labeled noncrossing partitions, we review two results of Irving and Rattan.

Fix positive integers $n \ge k$ and $\pi \in \symm_n$. Suppose that $\pi$ consists of $m$ disjoint cycles, $C_1, \ldots, C_m$, where $C_i$ has length $\ell_i$. Unless specified otherwise, cycles are indexed in order of their minimal elements. Let $p$ be the index of the cycle containing the pivot $k$.

\begin{lemma}[\cite{irving rattan}]\label{lem:how cycles get factored}
Let $\delta$ be a star factorization of $\pi$.
\begin{enumerate}
\item[(a)] If $C_i = (a_1 \ a_2 \ \cdots \ a_t)$, where $i \neq p$, then some transposition $(k \ a_j)$ appears exactly twice in $\delta$ and all $(k \ a_h)$ with $h \neq j$ appear exactly once in $\delta$. These appear as $(k \ a_j) (k \ a_{j-1}) \cdots (k \ a_1) (k \ a_t) \cdots (k \ a_{j+1}) (k \ a_j)$, from left to right in $\delta$.
\item[(b)] If $C_p = (k \ b_2 \ \cdots \ b_t)$, then each transposition $(k \ b_h)$ appears exactly once in $\delta$. These appear as $(k \ b_t) \cdots (k \ b_2)$, from left to right in $\delta$.
\end{enumerate}
\end{lemma}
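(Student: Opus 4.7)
The plan is to combine a counting argument for multiplicities with a careful analysis of sub-products, leveraging the identity
\[
(k\ \alpha_s)(k\ \alpha_{s-1}) \cdots (k\ \alpha_1) = (k\ \alpha_1\ \alpha_2\ \cdots\ \alpha_s)
\]
valid for any distinct $\alpha_1,\ldots,\alpha_s \in [n] \setminus \{k\}$, together with the conjugation rule $(k\ \beta)(k\ \alpha_1\ \alpha_2\ \cdots\ \alpha_s)(k\ \beta) = (\beta\ \alpha_1\ \alpha_2\ \cdots\ \alpha_s)$ when $\beta \notin \{\alpha_1,\ldots,\alpha_s\}$, which produces a cycle \emph{not} containing $k$.

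First I would fix the multiplicity pattern. Transitivity forces each of the $n-1$ transpositions $(k\ i)$ to appear in $\delta$ at least once, and since $\delta$ has length $n+m-2$ there are exactly $m-1$ ``extra'' occurrences overall. By the first identity above, a product of pairwise distinct star transpositions is a single cycle through $k$; to produce a cycle not containing $k$, as is the case for each non-pivot $C_i$, at least one transposition with second entry in $C_i$ must be repeated so that $k$ can be conjugated out. Since there are exactly $m-1$ extras and $m-1$ non-pivot cycles, each non-pivot $C_i$ must be assigned exactly one doubled transposition, and $C_p$ receives none, establishing the stated multiplicities.

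Next I would pin down the order. For each non-pivot $C_i$, let $(k\ a_j)$ be the doubled transposition, with occurrences at positions $s<t$ of $\delta$. Using the identity, the subword strictly between positions $s$ and $t$, restricted to transpositions whose second entry lies in $C_i$, must equal the single cycle $(k\ a_{j+1}\ a_{j+2}\ \cdots\ a_t\ a_1\ a_2\ \cdots\ a_{j-1})$; this forces the left-to-right order
\[
(k\ a_{j-1})(k\ a_{j-2}) \cdots (k\ a_1)(k\ a_t)(k\ a_{t-1}) \cdots (k\ a_{j+1})
\]
of those transpositions inside $\delta$, and the flanking conjugation by $(k\ a_j)$ yields $C_i = (a_1\ a_2\ \cdots\ a_t)$, establishing (a). For $C_p$, no repeat occurs, so the identity alone forces the order $(k\ b_t)(k\ b_{t-1}) \cdots (k\ b_2)$ reading left to right, giving (b).

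The main obstacle I anticipate is controlling the interaction between transpositions from different cycles. Concretely, one must show that the interval in $\delta$ spanned by the two occurrences of $(k\ a_j)$ and the interval spanned by the two occurrences of $(k\ a_{j'})$ for a different non-pivot cycle $C_{i'}$ either nest or are disjoint, never crossing. Otherwise, further repetitions would be required to correct the cycle structure of the product, contradicting the count $m-1$. I expect this nesting lemma to be the combinatorial heart of the proof, and it clearly foreshadows the noncrossing-partition interpretation developed in the remainder of Section~\ref{sec:star factorizations as ncp}.
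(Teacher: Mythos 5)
The paper does not prove Lemma~\ref{lem:how cycles get factored}; it is quoted from Irving and Rattan \cite{irving rattan} without proof. So there is no in-paper argument to compare against, and your proposal should be judged purely on whether it would actually work.

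Your outline captures the correct shape of the result, but it has a genuine gap that is more central than you acknowledge. Both your multiplicity step and your ordering step quietly assume that the factors $(k\ a)$ with $a$ in a fixed cycle $C_i$ can be isolated as a sub-product whose effect on $C_i$ equals $\pi|_{C_i}$. That is false in general for a product of star transpositions: non-$C_i$ factors interspersed among the $C_i$ factors change the trajectory of an element of $C_i$, because once it reaches $k$ the \emph{very next} factor (whatever cycle it belongs to) carries it away. For instance, with $k=1$ and $S=\{2,3\}$, the word $(1\ 3)(1\ 2)(1\ 4)(1\ 3)(1\ 4)(1\ 2)(1\ 3)$ fixes $S$ pointwise, yet the subword of $S$-touching factors $(1\ 3)(1\ 2)(1\ 3)(1\ 2)(1\ 3)$ sends $2\mapsto 1\notin S$; restriction and product simply do not commute. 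The only reason the restriction is valid in the setting of the lemma is precisely the noncrossing/nesting structure of the occurrences --- which is exactly the ``nesting lemma'' you defer to the end. So the argument as written is circular: you use the restriction to get the multiplicities and the order, but the restriction is only justified once the nesting is known, and the nesting is essentially the content of the lemma. In the same vein, the claim that each non-pivot $C_i$ forces at least one repeated factor ``so that $k$ can be conjugated out'' is an intuition, not a proof; it again treats the $C_i$ factors as a contiguous block. You correctly identify the nesting statement as ``the combinatorial heart of the proof,'' but it is not a clean-up step at the end --- it is load-bearing for every earlier step, and a complete proof (as in Irving and Rattan) has to establish it first, typically by a careful tracking argument on how star multiplications split or merge cycles.
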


We can turn $\delta \in \star_k(\pi)$ into a word
$$\omega(\delta) \in [m]^{n+m-2}$$
recording the index of the cycle containing $i$ for each factor $(k \ i)$.

\begin{example}\label{ex:star factorization}
Let $\pi = (13)(285)(4)(67)$ and $k = 6$, with
$$\delta = (6 \ 8) (6 \ 1) (6 \ 3) (6 \ 1) (6 \ 2) (6 \ 5) (6 \ 8) (6 \ 7) (6 \ 4) (6 \ 4).$$
Then $p = 4$ because $6$ appears in the fourth cycle, and
$$\omega(\delta) = 2111222433.$$
\end{example}

Irving and Rattan characterize the possible words $\omega(\delta)$ that are formed by $\delta \in \star_k(\pi)$.

\begin{definition}\label{defn:valid words}
A word $\omega \in [m]^{n+m-2}$ is \emph{valid} if
\begin{itemize}
\item the symbol $p$ appears $\ell_p - 1$ times,
\item for all $j \in [m] \setminus \{p\}$, the symbol $j$ appears $\ell_j+1$ times,
\item for $i \neq j$, there is no subword $ijij$ in $\omega(\delta)$, and
\item for $i \neq p$, there is no subword $ipi$ in $\omega(\delta)$.
\end{itemize}
\end{definition}

\begin{lemma}[\cite{irving rattan}]\label{lem:how cycles interact}
A word $\omega \in [m]^{n+m-2}$ is valid if and only if $\omega = \omega(\delta)$ for some $\delta \in \star_k(\pi)$.
\end{lemma}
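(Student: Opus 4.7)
The lemma asserts that the word $\omega(\delta)$ of a star factorization is characterized by the four validity conditions, so the proof splits into two directions, both guided by Lemma~\ref{lem:how cycles get factored}.

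For the forward direction, assume $\omega = \omega(\delta)$ for some $\delta \in \star_k(\pi)$. The symbol-count conditions are immediate: part~(a) forces each non-pivot cycle $C_i$ to contribute $\ell_i + 1$ transpositions to $\delta$ (one per element, with one element repeated), while part~(b) forces $C_p$ to contribute $\ell_p - 1$. For the noncrossing conditions, the key observation is that by part~(a), for each $i \neq p$ the first and last occurrences of symbol $i$ in $\omega$ correspond to the same transposition, and the intermediate $i$-occurrences trace the remaining elements of $C_i$ in cyclic order. An $ijij$ subword with $i, j$ distinct from $p$ would interleave the ``brackets'' spanned by these occurrences for $C_i$ and $C_j$; tracking the partial product built up by $\delta$, this forces either a redundant merge-split of the $k$-cycle (contradicting that the total length is exactly $n+m-2$) or a final permutation in which $C_i$ and $C_j$ are no longer disjoint, contradicting $\pi$'s cycle structure. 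The no-$ipi$ condition is handled similarly using the non-duplicated, linear structure of $C_p$ from part~(b).

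For the reverse direction, given a valid $\omega$, I would construct $\delta$ by placing transpositions as dictated by Lemma~\ref{lem:how cycles get factored}: fix a cyclic representative $C_i = (a_1\ a_2\ \cdots\ a_{\ell_i})$ for each $i \neq p$ and slot in $(k\ a_1), (k\ a_{\ell_i}), \ldots, (k\ a_2), (k\ a_1)$ at the positions where $\omega = i$; similarly use part~(b) at the $p$-positions. Then $\omega(\delta) = \omega$ by construction, so it remains to verify that the product equals $\pi$. I would prove this by induction on $n + m$. The key claim is that the noncrossing conditions force some non-pivot symbol $i$ to have all its occurrences in a single contiguous ``innermost'' block: the no-$ipi$ condition (coupled with $\ell_i + 1 \ge 2$) prevents any $p$-symbol from lying strictly between two $i$-symbols inside the bracket, and no-$ijij$ forces the non-pivot brackets to be nested or disjoint. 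A direct computation shows that the $\ell_i + 1$ transpositions in this block multiply to $C_i$ times the identity on $k$, which then commutes with the remaining transpositions (each pairing $k$ with an element outside $C_i$). Removing this block reduces $\omega$ to a valid word for $\pi$ with $C_i$ deleted, and induction finishes the argument.

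The main obstacle is the reverse direction's inductive step — specifically, confirming the existence of an innermost peelable block and verifying the commuting/product structure at each stage. This is essentially the recursive noncrossing structure underlying the bijection, and as such it serves as the technical bridge to the noncrossing partition interpretation developed in the rest of this section.
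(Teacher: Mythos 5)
The paper does not prove this lemma; it is stated with the citation \cite{irving rattan} and used as a black box, so there is no proof of record to compare against. Taking your proposal on its own terms: the overall architecture is right, and the reverse direction is the stronger half. Your key structural observation --- that the no-$ijij$ condition makes the brackets of the non-pivot symbols noncrossing, the no-$ipi$ condition keeps $p$'s out of every bracket, and the fact that each non-pivot symbol occurs at least $\ell_j+1\ge 2$ times together force an innermost bracket to be a contiguous run of a single symbol $i$ --- is correct, and the computation that such a block multiplies to $C_i$ (fixing $k$, hence commuting with everything to its right) is also correct. The peel-and-recurse induction on $n+m$ then goes through, provided you state the induction carefully enough to apply to the smaller permutation on $[n]\setminus C_i$ with re-indexed cycles and to cover the base case $m=1$.

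The forward direction is where the sketch is genuinely thin. The symbol counts do follow directly from Lemma~\ref{lem:how cycles get factored}, but the claim that an $ijij$ (or $ipi$) pattern ``forces either a redundant merge-split \ldots\ or a final permutation in which $C_i$ and $C_j$ are no longer disjoint'' is asserted as a dichotomy without establishing either horn. This is precisely the substantive content of the Irving--Rattan result beyond Lemma~\ref{lem:how cycles get factored}, which only pins down the internal order of the factors belonging to a single cycle, not how the cycles may interleave. To make this rigorous you would need to track, through the partial products read right to left, which elements currently lie in the $k$-cycle, and show that an interleaved bracket pair forces the split at the left endpoint of $C_i$'s bracket to detach a cycle that is not $C_i$ (or requires an extra merge/split, contradicting the fixed length $n+m-2$ with its forced count of $n-1$ merges and $m-1$ splits). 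As written, that step is a claim rather than an argument, and it is the one place I would call a real gap rather than a routine detail.
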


The third requirement in Definition~\ref{defn:valid words} suggests a kind of noncrossing phenomenon, but the fourth requirement is not quite on target. However, if we add a copy of $p$ to either end of a valid word $\omega$, then we can indeed rephrase this as a noncrossing problem.

\begin{definition}
Given a valid word $\omega$ for $\pi$, let $\ol{\omega}$ be the necklace obtained by inserting $p$ between the first and last letters of $\omega$.
\end{definition}

For consistency, we will read necklaces in counterclockwise order.

\begin{example}\label{ex:valid noncrossing}
The word $\omega = 2111222433$, valid for $(13)(285)(4)(67)$ with pivot $k = 6$, corresponds to the necklace $\ol{\omega}$ displayed in Figure~\ref{fig:necklace}, with the inserted copy of $4$ circled.
\begin{figure}[htbp]
$$\begin{tikzpicture}
\node[draw=none,minimum size=4.7cm,regular polygon,regular polygon sides=11] (b) {};
\node[draw=black,style=dotted,minimum size=4cm,regular polygon,regular polygon sides=11] (a) {};
\foreach \x in {1,...,11} {\fill (a.corner \x) circle (2pt);}
\draw (b.corner 1) node {$1$};
\draw (b.corner 2) node {$1$};
\draw (b.corner 3) node {$1$};
\draw (b.corner 4) node {$2$};
\draw (b.corner 5) node {$2$};
\draw (b.corner 6) node {$2$};
\draw (b.corner 7) node {$4$};
\draw (b.corner 8) node {$3$};
\draw (b.corner 9) node {$3$};
\draw (b.corner 10) node {$\circled{4}$};
\draw (b.corner 11) node {$2$};
\end{tikzpicture}$$

\caption{The necklace $\ol{2111222433}$, in which $4$ (circled in the figure) has been inserted between the first and last letters.}\label{fig:necklace}
\end{figure}
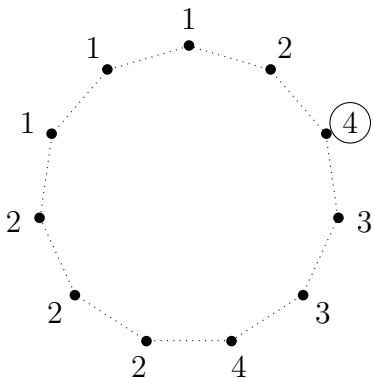
\end{example}

These necklaces allow us to recharacterize valid words as noncrossing partitions.

\begin{definition}
Fix a sequence of positive integers $x = (x_1,\ldots, x_t)$. A \emph{labeled noncrossing partition} of \emph{type} $x$ is a noncrossing partition of $[x_1 + \cdots + x_t]$ in which the parts have sizes $(x_1,\ldots, x_t)$, and the part of size $x_i$ (equivalently, each element in that part) is labeled $i$. Let 
$$\ncp(x)$$
be the set of labeled noncrossing partitions of type $x$. The rotation classes of these objects are the \emph{labeled noncrossing necklaces} of \emph{type} $x$, denoted
$$\ncpn(x) := \ncp(x)/\text{rotation}.$$
\end{definition}

\begin{example}
As illustrated in Figure~\ref{fig:ncp example}, $|\ncp(2,2)| = 4$ and $|\ncpn(2,2)| = 1$.
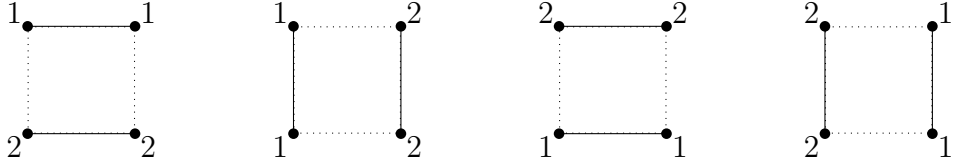
\begin{figure}[htbp]
$$\begin{tikzpicture}
\node[draw=none,minimum size=2.5cm,regular polygon,regular polygon sides=4] (b) {};
\node[draw=black,style=dotted,minimum size=2cm,regular polygon,regular polygon sides=4] (a) {};
\foreach \x in {1,...,4} {\fill (a.corner \x) circle (2pt);}
\draw (a.corner 1) -- (a.corner 2);
\draw (a.corner 3) -- (a.corner 4);
\draw (b.corner 1) node {$1$};
\draw (b.corner 2) node {$1$};
\draw (b.corner 3) node {$2$};
\draw (b.corner 4) node {$2$};
\end{tikzpicture}
\hspace{.5in}
\begin{tikzpicture}
\node[draw=none,minimum size=2.5cm,regular polygon,regular polygon sides=4] (b) {};
\node[draw=black,style=dotted,minimum size=2cm,regular polygon,regular polygon sides=4] (a) {};
\foreach \x in {1,...,4} {\fill (a.corner \x) circle (2pt);}
\draw (a.corner 2) -- (a.corner 3);
\draw (a.corner 1) -- (a.corner 4);
\draw (b.corner 1) node {$2$};
\draw (b.corner 2) node {$1$};
\draw (b.corner 3) node {$1$};
\draw (b.corner 4) node {$2$};
\end{tikzpicture}
\hspace{.5in}
\begin{tikzpicture}
\node[draw=none,minimum size=2.5cm,regular polygon,regular polygon sides=4] (b) {};
\node[draw=black,style=dotted,minimum size=2cm,regular polygon,regular polygon sides=4] (a) {};
\foreach \x in {1,...,4} {\fill (a.corner \x) circle (2pt);}
\draw (a.corner 1) -- (a.corner 2);
\draw (a.corner 3) -- (a.corner 4);
\draw (b.corner 1) node {$2$};
\draw (b.corner 2) node {$2$};
\draw (b.corner 3) node {$1$};
\draw (b.corner 4) node {$1$};
\end{tikzpicture}
\hspace{.5in}
\begin{tikzpicture}
\node[draw=none,minimum size=2.5cm,regular polygon,regular polygon sides=4] (b) {};
\node[draw=black,style=dotted,minimum size=2cm,regular polygon,regular polygon sides=4] (a) {};
\foreach \x in {1,...,4} {\fill (a.corner \x) circle (2pt);}
\draw (a.corner 2) -- (a.corner 3);
\draw (a.corner 1) -- (a.corner 4);
\draw (b.corner 1) node {$1$};
\draw (b.corner 2) node {$2$};
\draw (b.corner 3) node {$2$};
\draw (b.corner 4) node {$1$};
\end{tikzpicture}$$
\caption{The four labeled noncrossing partitions of type $(2,2)$, which all are rotationally equivalent.}\label{fig:ncp example}
\end{figure}
\end{example}

Throughout this section, we will refer to the sequence $\ell' = (\ell'_1,\ldots, \ell'_m)$, where
\begin{equation}\label{eqn:ell'}
\ell'_i := \begin{cases} \ell_i + 1 & \text{if } i \in [m]\setminus \{p\}, \text{ and}\\
\ell_p & \text{if } i = p. \end{cases}
\end{equation}

\begin{proposition}\label{prop:valid noncrossing}
Consider a necklace $\alpha$ with $\ell'_i$ copies of $i$, for all $i \in [m]$. Then $\alpha \in \ncpn(\ell')$ if and only if $\alpha = \ol{\omega}$ for some valid word $\omega \in [m]^{n+m-2}$.
\end{proposition}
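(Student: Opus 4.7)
The plan is to translate the noncrossing condition on $\alpha$ directly into the two forbidden-subword conditions of Definition~\ref{defn:valid words}. First I would check that the letter counts are compatible: since $\sum_i \ell'_i = \ell_p + \sum_{i \neq p}(\ell_i+1) = n+m-1$, inserting one copy of $p$ into a length-$(n+m-2)$ word containing $\ell_p - 1$ copies of $p$ and $\ell_j + 1$ copies of each other $j$ produces a necklace with exactly $\ell'_i$ copies of $i$, matching the multiplicities required for type $\ell'$.

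For the \emph{if} direction, I would assume $\omega$ is valid, set $\alpha = \overline{\omega}$, and suppose for contradiction that $\alpha$ exhibits a cyclic $ijij$ pattern at four positions for some $i \neq j$. If the inserted $p$ is \emph{not} among those four positions, they lie inside $\omega$ in the same cyclic, hence linear, order, producing a linear $ijij$ (or $jiji$) subword of $\omega$ and violating the third bullet of Definition~\ref{defn:valid words}. If the inserted $p$ \emph{is} one of the four, I would rotate the starting point so that the inserted position is read last; the four letters then appear as $(L_1, L_2, L_3, p)$ where $L_1, L_2, L_3$ are entries of $\omega$ at three linearly-ordered positions. Matching an alternating pattern forces $p$ to be one of the two alternating symbols, so (after relabeling) $L_2 = p$ and $L_1 = L_3 = i \neq p$, giving a linear $ipi$ subword of $\omega$ and violating the fourth bullet.

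For the \emph{only if} direction, given $\alpha \in \ncpn(\ell')$, I would pick any copy of $p$ in $\alpha$ (one exists because $\ell'_p = \ell_p \geq 1$), delete it, and read the remaining $n+m-2$ positions counterclockwise starting just after the deleted position to produce a linear word $\omega \in [m]^{n+m-2}$ with the correct letter multiplicities. A linear $ijij$ in $\omega$ would be a cyclic $ijij$ in $\alpha$, and a linear $ipi$ in $\omega$ together with the deleted copy of $p$ would be a cyclic $ipip$ in $\alpha$; the noncrossing condition on $\alpha$ forbids both. Hence $\omega$ is valid and, by construction, $\overline{\omega} = \alpha$.

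The only real obstacle is the case analysis in the \emph{if} direction when the inserted $p$ participates in a potential crossing: one has to notice that the inserted letter is fixed to be $p$, so the only alternating pattern it can close off is one whose middle symbol is also $p$, and this is exactly what the asymmetric fourth bullet of Definition~\ref{defn:valid words} rules out. Once that point is isolated, everything else is a direct unfolding of definitions.
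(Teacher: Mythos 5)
Your proof is correct and takes essentially the same approach as the paper: both directions translate a cyclic crossing in $\alpha$ into a forbidden linear $ijij$ or $ipi$ subword of $\omega$ under the remove-a-copy-of-$p$-and-read operation, with the key observation that a crossing touching the inserted $p$ forces the asymmetric $ipi$ pattern. The paper organizes its case split around whether the crossing involves the letter $p$ and considers all $\ell'_p$ possible removals at once, whereas you fix the single inserted $p$ and split on whether it lies among the four crossing positions, but the two arguments are equivalent.
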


\begin{proof}
If $\alpha \in \ncpn(\ell')$, then removing any copy of $p$ and reading counterclockwise will produce a string $\omega$ that satisfies all requirements of Definition~\ref{defn:valid words}. Thus $\alpha = \ol{\omega}$, with that chosen copy of $p$ being the inserted one. Now suppose that $\alpha$ has a crossing, and consider all $\ell'_p$ words obtained by removing one copy of $p$ from $\alpha$ and reading the remaining letters in counterclockwise order. If the crossing in $\alpha$ involves values other than $p$, then that crossing will also appear in the resulting word, violating the third requirement for validity. On the other hand, if that crossing involves $p$ and some letter $i$, then there is a substring $ipi$ appearing in all $\ell'_p$ words obtained in this way, violating the fourth requirement for validity.
\end{proof}

Proposition~\ref{prop:valid noncrossing} is illustrated in Figure~\ref{fig:valid noncrossing}, continuing Example~\ref{ex:valid noncrossing}.

\begin{figure}[htbp]
$$\begin{tikzpicture}
\node[draw=none,minimum size=4.7cm,regular polygon,regular polygon sides=11] (b) {};
\node[draw=black,style=dotted,minimum size=4cm,regular polygon,regular polygon sides=11] (a) {};
\foreach \x in {1,...,11} {\fill (a.corner \x) circle (2pt);}
\draw[thick] (a.corner 11) -- (a.corner 4) -- (a.corner 5) -- (a.corner 6) -- (a.corner 11);
\draw[thick] (a.corner 1) -- (a.corner 2) -- (a.corner 3) -- (a.corner 1);
\draw[thick] (a.corner 7) -- (a.corner 10);
\draw[thick] (a.corner 8) -- (a.corner 9);
\draw (b.corner 11) node {$2$};
\draw (b.corner 1) node {$1$};
\draw (b.corner 2) node {$1$};
\draw (b.corner 3) node {$1$};
\draw (b.corner 4) node {$2$};
\draw (b.corner 5) node {$2$};
\draw (b.corner 6) node {$2$};
\draw (b.corner 7) node {$4$};
\draw (b.corner 8) node {$3$};
\draw (b.corner 9) node {$3$};
\draw (b.corner 10) node {$\circled{4}$};
\end{tikzpicture}$$
\caption{A labeled noncrossing necklace of type $(3,4,2,2)$, marked to match Figure~\ref{fig:necklace}.}\label{fig:valid noncrossing}
\end{figure}
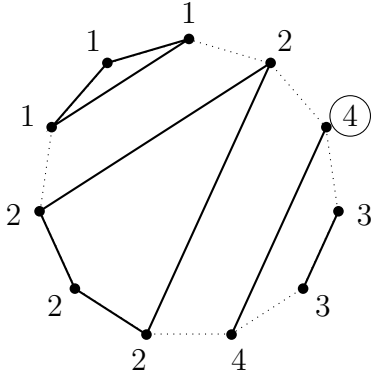

\begin{definition}\label{defn:map sf}
Define the map
\begin{align*}
\textsf{SF}_k : \ncpn(\ell') \times [\ell_1] \times \cdots \times [\ell_m] &\rightarrow \star_k(\pi)\\
(\alpha,d_1,\ldots,d_m) &\mapsto \delta 
\end{align*}
as follows. First, fix a presentation of $\alpha$ and remove the $d_p$th appearance of $p$ when reading the necklace in counterclockwise order from the topmost letter. Let $\omega$ be the (valid) word formed by reading the remainder of $\alpha$ in counterclockwise order from this removal (hence $\alpha = \ol{\omega}$). We know how to interpret each $p$ in this word by Lemma~\ref{lem:how cycles get factored}(a). For each $j \in [m] \setminus \{p\}$ let $(k \ c_j)$ be the factor that will appear twice in the $\delta$, where $c_j$ is the $d_j$th smallest value in cycle $C_j$. Thus, by Lemma~\ref{lem:how cycles get factored}(b), we have completely defined $\delta$.
\end{definition}

\begin{example}
Let $\pi = (13)(285)(4)(67)$ and $k = 6$, so $\ell' = (3,4,2,2)$. Let $\alpha$ be the necklace depicted in Figure~\ref{fig:valid noncrossing}. Let $(d_1,d_2,d_3,d_4) = (1,3,1,2)$. This $d_4$ removes the circled copy of $4$ in Figure~\ref{fig:valid noncrossing}, and $\omega = 2111222433$. This $d_1$ means that the $1$s in $\omega$ should be replaced by $(6 \ 1)(6 \ 3)(6 \ 1)$, while $d_2$ means that $(6 \ 8)$ should appear twice in $\delta$, and $d_3$ indicates that $(6 \ 4)$ should appear twice in $\delta$:
$$\delta = (6 \ 8) (6 \ 1) (6 \ 3) (6 \ 1) (6 \ 2) (6 \ 5) (6 \ 8) (6 \ 7) (6 \ 4) (6 \ 4) \in \star_6((13)(285)(4)(67)),$$
as in Example~\ref{ex:star factorization}.
If, instead, we had used $(d_1,d_2,d_3,d_4) = (2,3,1,1)$, then the resulting star factorization would have been a cyclic rotation of this word $\delta$:
$$(6 \ 4) (6 \ 4) (6 \ 7) (6 \ 8) (6 \ 3) (6 \ 1) (6 \ 3) (6 \ 2) (6 \ 5) (6 \ 8) \in \star_6((13)(285)(4)(67)).$$
\end{example}

\begin{theorem}\label{thm:sf is a bijection}
The map $\SF_k$ is a bijection.
\end{theorem}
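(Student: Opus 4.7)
The plan is to construct an explicit inverse $\SF_k^{-1}: \star_k(\pi) \to \ncpn(\ell') \times [\ell_1] \times \cdots \times [\ell_m]$ and verify that both composites are the identity. The machinery is essentially already in place: Lemma~\ref{lem:how cycles get factored} shows that a star factorization is uniquely determined by its word $\omega(\delta)$ together with the choice of doubled element in each non-pivot cycle; and Proposition~\ref{prop:valid noncrossing} identifies valid words with labeled noncrossing necklaces once one marks which occurrence of $p$ was inserted.

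Given $\delta \in \star_k(\pi)$, I would set $\omega = \omega(\delta)$, which is valid by Lemma~\ref{lem:how cycles interact}. Then $\ol{\omega}$ represents some $\alpha \in \ncpn(\ell')$. Under the fixed presentation of $\alpha$, the inserted copy of $p$ (placed between the last and first letters of $\omega$) occupies a definite position; let $d_p \in [\ell_p]$ be its index among the $p$-labeled positions, counted counterclockwise from the topmost letter. For each $j \in [m] \setminus \{p\}$, Lemma~\ref{lem:how cycles get factored}(a) identifies a unique element $c_j \in C_j$ for which $(k\ c_j)$ appears twice in $\delta$; set $d_j$ to be the rank of $c_j$ in the increasing order of elements of $C_j$. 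This defines the candidate inverse $\delta \mapsto (\alpha, d_1, \ldots, d_m)$.

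For $\SF_k \circ \SF_k^{-1} = \mathrm{id}$: by the choice of $d_p$, removing the $d_p$th copy of $p$ in the fixed presentation of $\alpha$ returns exactly $\omega(\delta)$, and the doubling data $(d_j)_{j\neq p}$ together with Lemma~\ref{lem:how cycles get factored} uniquely reconstructs $\delta$. The reverse composite $\SF_k^{-1} \circ \SF_k = \mathrm{id}$ is handled symmetrically: the factorization $\SF_k(\alpha, d_1, \ldots, d_m)$ has word equal to the constructed linearization, so its necklace is $\alpha$ with the marked position in the prescribed location, and its twice-appearing transpositions recover each $d_j$.

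The main obstacle is ensuring that $d_p$ is genuinely recoverable---that different values of $d_p$ produce different linearizations $\omega$ for a given $\alpha$. This reduces to showing that any $\alpha \in \ncpn(\ell')$ with $m \geq 2$ has trivial rotational stabilizer. Since the labels are distinct across parts, any rotation preserving the labeling must send each labeled part to a part with the same label and therefore must fix each part setwise. If such a rotation $\rho$ has order $r > 1$, the block containing position $1$ would contain its full $\rho$-orbit, and any block nested inside one of the arcs between consecutive orbit points would have a $\rho$-translate in the next arc with the same label---contradicting distinct labels across parts---unless the outer block engulfs every position, which forces $m = 1$. Hence $m \geq 2$ gives trivial stabilizer, making $\SF_k$ a bijection.
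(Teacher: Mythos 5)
Your proposal is correct and follows essentially the same route as the paper: you package the paper's surjectivity step (reading $\omega(\delta)$, forming $\ol{\omega}$, and reading off the $d_j$'s from the doubled transpositions) as an explicit inverse, and your ``main obstacle'' --- that a noncrossing labeled necklace with $m\ge 2$ distinct labels has trivial rotation stabilizer, so the inserted $p$'s position is recoverable --- is exactly the content of the paper's injectivity argument (where a nontrivial stabilizer would force a crossing or a single cycle). Your explicit restriction to $m\ge 2$ at the end is, if anything, a touch more careful about the degenerate single-cycle case than the paper is.
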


\begin{proof}
Suppose that $\SF_k(\alpha,d_1,\ldots,d_m) = \SF_k(\beta,e_1,\ldots,e_m)$. The valid words defined by $(\alpha, d_p)$ and $(\beta,e_p)$ must be the same in order to yield the same star factorization. Thus the necklaces $\alpha$ and $\beta$ differ only by rotation and hence are the same. Suppose that $d_p$ and $e_p$ differ. Then, reading from the $d_p$th copy of $p$, the letters of the necklace are $p \omega_1\omega_2\cdots\omega_t p \omega_1 \omega_2\cdots \omega_t$. If some $\omega_i \neq p$ then $\alpha$ would have a crossing, which is a contradiction. On the other hand, if all $\omega_i = p$, then there is only one necklace and so only one cycle in $\pi$ and only one factorization. In order to have the correct factors appear twice in the factorization output by $\SF_k$, the values $d_i$ and $e_i$ must be equal for all $i \in [m] \setminus \{p\}$. Thus $\SF_k$ is injective.

To show that $\SF_k$ is surjective, we can take any $\delta = \star_k(\pi)$ and define $d_i$ for all $i \in [m] \setminus \{p\}$ as required by Definition~\ref{defn:map sf}. That is, if $(k \ c_j)$ appears twice in $\delta$, then define $d_j$ so that $c_j$ is the $d_j$th smallest value in the cycle $C_j$. Form $\ol{\omega}$ from the word $\omega(\delta)$, oriented so that, say, the first $1$ in $\omega(\delta)$ is the top bead in $\ol{\omega}$, and let $d_p$ indicate the letter $p$ that was added to form this necklace. Thus $\SF_k(\ol{\omega},d_1,\ldots,d_m) = \delta$.
\end{proof}

Theorem~\ref{thm:sf is a bijection} and Equation~\eqref{eqn:irving rattan} yield a central relationship of this paper; namely, that the number of non-crossing necklaces of specified type depends only on the number of parts in that specification.

\begin{corollary}\label{cor:necklace count independent}
For $x = (x_1,\ldots,x_m)$ with at most one $x_i$ equal to $1$,
$$|\ncpn(x)| = (\textstyle\sum x_i - 1)_{m-2}.$$
\end{corollary}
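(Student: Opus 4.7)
The plan is to derive the corollary as a direct consequence of combining the bijection from Theorem~\ref{thm:sf is a bijection} with the Irving--Rattan enumeration in Equation~\eqref{eqn:irving rattan}. Given $x = (x_1, \ldots, x_m)$ satisfying the hypothesis, I first construct a cycle type $\ell = (\ell_1, \ldots, \ell_m)$ whose associated sequence $\ell'$, as defined in Equation~\eqref{eqn:ell'}, equals $x$. To do so I choose $p$ to be the index with $x_p = 1$ if such an index exists, and otherwise let $p$ be any index; then setting $\ell_p := x_p$ and $\ell_i := x_i - 1$ for $i \neq p$ produces cycle lengths each at least $1$, and by construction $\ell' = x$.

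Next, let $\pi \in \symm_n$ be any permutation with disjoint cycles of these lengths, where $n = \sum \ell_i = (\sum x_i) - (m-1)$, and let $k$ be any pivot lying in the cycle indexed by $p$. Theorem~\ref{thm:sf is a bijection} and Equation~\eqref{eqn:irving rattan} then give, respectively,
$$|\star_k(\pi)| \;=\; |\ncpn(x)| \cdot \ell_1 \cdots \ell_m \qquad\text{and}\qquad |\star_k(\pi)| \;=\; \frac{(n+m-2)!}{n!} \cdot \ell_1 \cdots \ell_m.$$
Equating these two expressions and cancelling the common factor $\ell_1 \cdots \ell_m$ yields $|\ncpn(x)| = (n+m-2)_{m-2}$, and substituting $n + m - 2 = (\sum x_i) - 1$ produces the claimed formula.

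The only subtle point, and the closest thing to an obstacle, is bookkeeping: the hypothesis ``at most one $x_i$ equal to $1$'' is precisely what is needed to realize $x$ as some $\ell'$-sequence. Indeed, for $i \neq p$ one has $\ell'_i = \ell_i + 1 \geq 2$, so only the pivot slot can accommodate an entry equal to $1$; any $x$ with two or more $1$s could not arise as $\ell'$ for a genuine permutation. With this single observation in place, the construction above goes through unobstructed and the corollary follows immediately.
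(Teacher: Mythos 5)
Your proposal is correct and follows essentially the same route as the paper, which presents this corollary as an immediate consequence of Theorem~\ref{thm:sf is a bijection} combined with Equation~\eqref{eqn:irving rattan}. Your explicit construction of a cycle type $\ell$ with $\ell' = x$ (choosing $p$ at the index with $x_p = 1$ when present), together with the cancellation of $\ell_1 \cdots \ell_m$ and the substitution $n + m - 2 = \sum x_i - 1$, is precisely the bookkeeping the paper leaves implicit, and your remark that the hypothesis ``at most one $x_i = 1$'' is exactly the condition for $x$ to be realizable as an $\ell'$-sequence is the right observation.
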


We can use Theorem~\ref{thm:sf is a bijection} and Corollary~\ref{cor:necklace count independent} to enumerate labeled noncrossing partitions with specified part sizes. This can also be shown using a result of Kreweras \cite{kreweras}. It is surprising that this depends only on the number of specified part sizes, not on the sizes themselves.

\begin{corollary}\label{cor:counting ncp}
The number of labeled noncrossing partitions of $X$ into $m$ parts of specified sizes is the falling factorial $(X)_{m-1}$.
\end{corollary}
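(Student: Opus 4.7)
The plan is to derive the count from Corollary~\ref{cor:necklace count independent} via the identity $|\ncp(x)| = X \cdot |\ncpn(x)|$, which will come from the freeness of the rotation action of $\mathbb{Z}/X$ on $\ncp(x)$, and to handle the case where $x$ has several entries equal to $1$ (excluded from Corollary~\ref{cor:necklace count independent}) by a singleton-deletion induction.

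The argument will proceed by induction on $m$. The base case $m = 1$ is trivial: $|\ncp((X))| = 1 = (X)_0$. For the inductive step with $m \geq 2$, one splits into two subcases. If $x$ has at most one entry equal to $1$, the main step is the freeness lemma. Since the $m$ parts of any $\alpha \in \ncp(x)$ carry distinct labels, a rotation fixing $\alpha$ must send each part to itself. Replacing the rotation amount by its $\gcd$ with $X$, one reduces to a rotation by a divisor $g$ of $X$ with $1 \leq g < X$, so each part is a union of $g$-orbits of size $X/g$. If $g = 1$ the single orbit $[X]$ would be one part, forcing $m = 1$. If $g \geq 2$, then $g \leq X/2$ ensures the four elements $1, 2, 1{+}g, 2{+}g$ are distinct in $[X]$; if $1$ and $2$ lay in distinct parts $S, T$ then $\{1, 1{+}g\} \subseteq S$ and $\{2, 2{+}g\} \subseteq T$ would cross, a contradiction. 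Hence $1, 2$ share a part, and iterating across consecutive pairs (using the equivalence of linear and cyclic noncrossing to handle wrap-around) forces $[X]$ into a single part, again contradicting $m \geq 2$. Therefore $|\ncp(x)| = X \cdot |\ncpn(x)|$, and Corollary~\ref{cor:necklace count independent} then gives $X \cdot (X-1)_{m-2} = (X)_{m-1}$.

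In the remaining subcase, some $x_i = 1$; after relabeling assume $x_m = 1$ and set $x' = (x_1, \ldots, x_{m-1})$. The plan is to exhibit a bijection
$$\ncp(x) \; \longrightarrow \; \ncp(x') \times [X], \qquad \alpha \mapsto (\alpha', j),$$
where $j$ is the element of $[X]$ labeled $m$ in $\alpha$ and $\alpha'$ is the partition of $[X] \setminus \{j\}$ renumbered to $[X-1]$ preserving cyclic order. The inverse inserts a new singleton labeled $m$ at position $j$; both directions preserve noncrossing-ness, since inserting or deleting a singleton cannot create or destroy crossings among the other parts. This gives $|\ncp(x)| = X \cdot |\ncp(x')|$, and the induction hypothesis delivers the desired $X \cdot (X-1)_{m-2} = (X)_{m-1}$.

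The main obstacle is the rotation-stabilizer lemma, and in particular the wrap-around bookkeeping when iterating the crossing argument across consecutive pairs $j, j+1$ with $j + 1 + g > X$; the rest of the proof is routine.
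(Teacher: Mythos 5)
Your proof is correct and follows the same overall plan as the paper: combine Corollary~\ref{cor:necklace count independent} with the factor $X$ coming from freeness of the rotation action on $\ncp(x)$, and handle extra parts of size $1$ by a singleton-deletion induction. Two differences in execution are worth noting. The paper runs the induction entirely at the necklace level, establishing $|\ncpn(x^+)| = |x|\cdot|\ncpn(x)|$ by inserting the new label into any of the $|x|$ gaps, and converts to $\ncp$ only once at the very end; you instead work directly with $\ncp$, via the bijection $\ncp(x) \cong \ncp(x') \times [X]$ given by deleting the singleton labeled $m$ and recording its position. More substantially, the paper simply asserts that for $m > 1$ the $|x|$ rotations of a necklace of type $x$ are distinct, while you actually prove this freeness: reduce the stabilizing rotation to a proper divisor $g$ of $X$, and use the would-be crossing of $\{j, j{+}g\}$ with $\{j{+}1, j{+}1{+}g\}$ to force every cyclically consecutive pair into a common block, contradicting $m \ge 2$. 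The wrap-around concern you flag is not a genuine obstacle: all arithmetic is modulo $X$, and since $2 \le g \le X/2$ the four points $j, j{+}1, j{+}g, j{+}1{+}g$ remain distinct and in the required cyclic order for every $j$, so the crossing argument applies verbatim to each consecutive pair. Your freeness argument is therefore complete, and in this respect your write-up is more explicit than the paper's.
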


\begin{proof}
If $m = 1$ then there is clearly $1 = (X)_{1-1}$ labeled noncrossing partition of $X$.

Now assume that $m > 1$, and write $|\cdot|$ for the sum of the terms in a sequence. Thus, for any sequence $x = (x_1,\ldots,x_m)$ of positive integers with at most one $x_i = 1$, we have Corollary~\ref{cor:necklace count independent}, which says that $|\ncpn(x)| = (|x|-1)_{m-2}$. We can expand this to allow more $1$s by inducting on $m$. If $m = 2$, then certainly $|\ncpn(x)| = 1 = (|x|-1)_{2-2}$. Now suppose that the result holds for $m \ge 2$ and consider an $(m+1)$-tuple $x^+$ having at least two $1$s. Without loss of generality, assume that $x^+ = (x_1,\ldots, x_m, 1)$. Set $x := (x_1,\ldots,x_m)$. Then each labeled noncrossing necklace of type $x^+$ can be obtained by inserting the label $m+1$ anywhere among the $|x|$ letters in any element of $\ncpn(x)$. Thus
$$|\ncpn(x^+)| = |x|\cdot|\ncpn(x)| = |x| \cdot (|x|-1)_{m-2} = (|x|)_{m-1} = (|x^+|-1)_{(m+1)-2},$$
as desired.

It remains now to convert noncrossing necklaces to noncrossing partitions. If $m = 1$, then $|\ncp(x)| = 1 = |x|_{1-1}$. On the other hand, if $m > 1$, then the $|x|$ rotations of a necklace of type $x$ are all distinct in $\ncp(x)$. Therefore
\begin{align*}
|\ncp(x)| &= |x| \cdot |\ncpn(x)|\\
&= |x| \cdot (|x|-1)_{m-2}\\
&= (|x|)_{m-1}.
\end{align*}
\end{proof}

\section{Pivot independence in star factorizations}\label{sec:pivot independence}

Theorem~\ref{thm:sf is a bijection} gives a correspondence between labeled noncrossing necklaces and star factorizations. We can use those partitions as an intermediary between elements of $\star_k(\pi)$ and elements of $\star_{k'}(\pi)$, giving a new combinatorial proof of Equation~\eqref{eqn:irving rattan}.

Fix positive integers $n \ge k > k'$ and a permutation $\pi \in \symm_n$. As before, suppose that $\pi$ consists of $m$ disjoint cycles, $C_1,\ldots,C_m$, where $C_i$ has length $\ell_i$. Let $p$ be the index of the cycle containing the pivot $k$ and let $p'$ be the index of the cycle containing the pivot $k'$. Define sequences $a = (a_1,\ldots,a_m)$ and $b = (b_1,\ldots,b_m)$ as
$$a_i := \begin{cases} \ell_i+1 & \text{if } i \in [m] \setminus \{p\}\text{, and}\\ \ell_p & \text{if }i = p,\end{cases}
\hspace{.25in} \text{and} \hspace{.25in}
b_i := \begin{cases} \ell_i+1 & \text{if } i \in [m] \setminus \{p'\}\text{, and}\\ \ell_{p'} & \text{if }i = p'.\end{cases}$$
Fix $\delta \in \star_k(\pi)$. Our goal will be to define a corresponding $\delta' \in \star_{k'}(\pi)$. We will do this using $\SF_k$ from the previous section, and the map $\Shift$ defined below.

\begin{definition}\label{defn:shift}
Define the map
\begin{align*}
\Shift : \ncpn(a) \times [\ell_1]\times\cdots\times[\ell_m] &\rightarrow \ncpn(b) \times [\ell_1]\times\cdots\times[\ell_m]\\
(\alpha,d_1,\ldots,d_m) &\mapsto (\beta,d_1,\ldots,d_m)
\end{align*}
as follows. If $p = p'$, then set $\beta := \alpha$. Otherwise, first fix a presentation of $\alpha$ and locate the $d_p$th copy of $p$ when reading the necklace in counterclockwise order from the topmost letter. Counterclockwise from there, locate the first copy of $p'$, reading a string of the form $ps_1\cdots s_t$, where $s_t = p'$. Let $h \in [1,t]$ be maximal such that the necklace obtained by replacing the string $ps_1\cdots s_{t-1} p'$ by $ps_1\cdots s_{h-1} p s_h \cdots s_{t-1}$ would not have any crossings. Let $\beta \in \ncpn(b)$ be the necklace obtained by this replacement.
\end{definition}

Note that the value of $d_p$ determines which copy of $p$ to use for orientation in $\Shift$.

We know that the $h$ described in Definition~\ref{defn:shift} exists because $s_1$ has the desired property.

\begin{example}\label{ex:beta}
Let $n = 8$, $k = 6$, and $k' = 3$, and let $\pi = (13)(285)(4)(67)$. Then $p = 4$ and $p' = 1$. Let $\alpha$ be the necklace depicted in Figure~\ref{fig:valid noncrossing}. Then $\Shift(\alpha,1,3,1,2) = (\beta,1,3,1,2)$, where $\beta$ is the necklace depicted in Figure~\ref{fig:beta}, with the $d_1$st counterclockwise-from-top appearance of $p'$ circled.
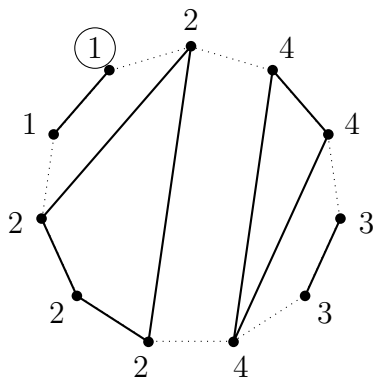
\begin{figure}[htbp]
$$
\begin{tikzpicture}
\node[draw=none,minimum size=4.7cm,regular polygon,regular polygon sides=11] (b) {};
\node[draw=black,style=dotted,minimum size=4cm,regular polygon,regular polygon sides=11] (a) {};
\foreach \x in {1,...,11} {\fill (a.corner \x) circle (2pt);}
\draw[thick] (a.corner 1) -- (a.corner 4) -- (a.corner 5) -- (a.corner 6) -- (a.corner 1);
\draw[thick] (a.corner 2) -- (a.corner 3);
\draw[thick] (a.corner 7) -- (a.corner 10) -- (a.corner 11) -- (a.corner 7);
\draw[thick] (a.corner 8) -- (a.corner 9);
\draw (b.corner 11) node {$4$};
\draw (b.corner 1) node {$2$};
\draw (b.corner 2) node {$\circled{1}$};
\draw (b.corner 3) node {$1$};
\draw (b.corner 4) node {$2$};
\draw (b.corner 5) node {$2$};
\draw (b.corner 6) node {$2$};
\draw (b.corner 7) node {$4$};
\draw (b.corner 8) node {$3$};
\draw (b.corner 9) node {$3$};
\draw (b.corner 10) node {$4$};
\end{tikzpicture}$$
\caption{The image of $\Shift(\alpha,1,3,1,2)$, where $\alpha$ appeared in Figure~\ref{fig:valid noncrossing}.}\label{fig:beta}
\end{figure}
\end{example}

\begin{proposition}
The map $\Shift$ is invertible.
\end{proposition}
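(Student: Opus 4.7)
The plan is to construct an explicit inverse $\Shift^{-1}$ that locally reverses the substitution defining $\Shift$, and to verify both composition identities directly. The case $p = p'$ is immediate since $\Shift$ is then the identity, so I focus on $p \neq p'$.

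Given $(\beta, d_1, \ldots, d_m) \in \ncpn(b) \times [\ell_1]\times\cdots\times[\ell_m]$, I fix the presentation of $\beta$, locate the $d_p$-th copy of $p$ counterclockwise from the topmost letter (call it $P$), and identify a distinguished substring of $\beta$ starting at $P$ of the form $p\, s_1 \cdots s_{h-1}\, p\, s_h \cdots s_{t-1}$. This substring is exactly the image under $\Shift$ of the $\alpha$-substring $p\, s_1 \cdots s_{t-1}\, p'$, so $\Shift^{-1}$ is defined to swap these two substring forms while leaving the $d_i$'s fixed. The second $p$ in the substring, call it $P^*$, can be characterized intrinsically in $\beta$: it lies in the interior of the edge of the $p'$-polygon of $\beta$ containing $P$ (the edge of $\beta$'s $p'$-polygon that arose by merging two edges of $\alpha$'s $p'$-polygon across the deleted $p'$), and it is singled out by the condition that deleting $P^*$ and inserting a $p'$ at the appropriate position yields a labeled noncrossing necklace $\alpha \in \ncpn(a)$ satisfying the maximality condition from $\Shift$.

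The main obstacle is the uniqueness of this characterization: among the possibly many copies of $p$ inside the relevant merged $p'$-edge of $\beta$, I must show that exactly one choice for $P^*$ (and one resulting position for reinserting $p'$) produces a valid preimage. This should follow from the noncrossing structure of $\beta$ together with the maximality of $h$ in $\Shift$: any alternative $P^*$ would either introduce a crossing in the candidate $\alpha$, violating membership in $\ncpn(a)$, or contradict the fact that $\Shift$ uses a maximal admissible $h$. Once $(h, t)$ is pinned down, the identities $\Shift \circ \Shift^{-1} = \mathrm{id}$ and $\Shift^{-1} \circ \Shift = \mathrm{id}$ reduce to direct verification that the local substring rewrites are mutually inverse, which is transparent from their forms.
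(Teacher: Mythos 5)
Your approach is genuinely different from the paper's, and it leaves a real gap. The paper proves invertibility by exhibiting a concrete two-sided inverse $\Shift'$, defined ``exactly as $\Shift$, except for replacing `counterclockwise from there' by `clockwise from there.''' That is, after locating the marked copy of the pivot label, one reads the opposite way around the necklace to find the other pivot label, and applies the same maximal-$h$ replacement rule; the underlying point is that the maximal-$h$ condition is symmetric under orientation reversal, so the letter that $\Shift$ pushed as far counterclockwise as the noncrossing condition allows is precisely the letter that $\Shift'$, reading clockwise, picks up and sends back. The paper then asserts that $\Shift'\circ\Shift$ and $\Shift\circ\Shift'$ are identities.

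Your proof instead tries to single out the inserted $p$ (your $P^*$) and the reinsertion point for $p'$ directly from $\beta$. But the characterization you give is not actually intrinsic: saying $P^*$ is ``singled out by the condition that deleting $P^*$ and inserting a $p'$ at the appropriate position yields a labeled noncrossing necklace $\alpha \in \ncpn(a)$ satisfying the maximality condition from $\Shift$'' is a restatement of ``$P^*$ is the unique choice for which $\Shift$ maps back to $\beta$,'' which presupposes the existence and uniqueness of a preimage rather than proving it. The parenthetical describing ``the edge of $\beta$'s $p'$-polygon that arose by merging two edges of $\alpha$'s $p'$-polygon across the deleted $p'$'' likewise invokes $\alpha$ to locate a feature of $\beta$, when $\alpha$ is exactly what you are trying to reconstruct. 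The sentence ``This should follow from the noncrossing structure of $\beta$ together with the maximality of $h$ in $\Shift$'' correctly names the ingredients but is the entire argument, not a sketch of one; it is the very content that needs to be established. To close the gap you would either need to give a concrete, $\alpha$-free rule in $\beta$ that selects $P^*$ and the reinsertion point (the paper's orientation-reversal rule is exactly such a rule), or actually carry out the existence-and-uniqueness analysis your outline defers. As it stands, the proof attempt assumes what it sets out to prove.
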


\begin{proof}
Let $\Shift' : \ncpn(b) \times [\ell_1]\times\cdots\times[\ell_m] \rightarrow \ncpn(a) \times [\ell_1]\times\cdots\times[\ell_m]$ be defined exactly as $\Shift$, except for replacing ``counterclockwise from there'' by ``clockwise from there.'' Then $\Shift' \circ \Shift$ and $\Shift \circ \Shift'$ are the identity maps on their respective spaces.
\end{proof}

\begin{corollary}\label{cor:irving rattan bijection}
The map
$$\SF_{k'} \circ \Shift \circ \SF_k^{-1} : \star_k(\pi) \rightarrow \star_{k'}(\pi)$$
is a bijection.
\end{corollary}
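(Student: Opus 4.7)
The plan is to read the map as the composition of three already-established bijections and simply check that the domains and codomains line up. By Theorem~\ref{thm:sf is a bijection}, the map $\SF_k$ is a bijection from $\ncpn(a) \times [\ell_1] \times \cdots \times [\ell_m]$ onto $\star_k(\pi)$, where $a$ is the sequence obtained from $\ell'$ in Equation~\eqref{eqn:ell'} using the pivot $k$ (so that the distinguished index is $p$). Since nothing in the proof of Theorem~\ref{thm:sf is a bijection} uses any special property of $k$ beyond the role of $p$, applying the same theorem with $k'$ in place of $k$ yields that $\SF_{k'}$ is a bijection from $\ncpn(b) \times [\ell_1] \times \cdots \times [\ell_m]$ onto $\star_{k'}(\pi)$. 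The preceding proposition gives that $\Shift$ is a bijection between $\ncpn(a) \times [\ell_1]\times\cdots\times[\ell_m]$ and $\ncpn(b) \times [\ell_1]\times\cdots\times[\ell_m]$.

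With these in hand, I would verify in one line that the pipeline is well typed: the codomain of $\SF_k^{-1}$ is $\ncpn(a)\times[\ell_1]\times\cdots\times[\ell_m]$, which is the domain of $\Shift$; the codomain of $\Shift$ is $\ncpn(b)\times[\ell_1]\times\cdots\times[\ell_m]$, which is the domain of $\SF_{k'}$. The composition of bijections is a bijection, and this is the claimed statement.

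There is essentially no obstacle here, since all the substantive content lives in Theorem~\ref{thm:sf is a bijection} and in the invertibility of $\Shift$ given just above. The only thing one might worry about is whether the $[\ell_1] \times \cdots \times [\ell_m]$ factor is really being treated consistently under both parameterizations, but this is immediate from the definitions of $\SF_k$, $\SF_{k'}$, and $\Shift$: the same tuple $(d_1,\ldots,d_m)$ is used throughout, with $d_p$ and $d_{p'}$ playing the analogous role of marking the inserted copy of the pivot cycle's label on each side. Consequently I would present the proof as a short two-sentence remark: compose and check the types.
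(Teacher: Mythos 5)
Your proposal is correct and matches the paper's (implicit) reasoning: the corollary is stated immediately after the invertibility of $\Shift$ precisely because it follows by composing the bijections $\SF_k^{-1}$, $\Shift$, and $\SF_{k'}$ and checking that the domains and codomains agree. Nothing more is needed, and your remark about the tuple $(d_1,\ldots,d_m)$ being carried through unchanged is the right sanity check.
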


Thus we have a bijection between star factorizations of $\pi$ with pivot $k$, and star factorizations of $\pi$ with pivot $k'$, giving a new combinatorial proof of Equation~\eqref{eqn:irving rattan}. Note that the bijection described in Corollary~\ref{cor:irving rattan bijection} is not the same as the bijection presented in \cite[Section 6]{tenner star}.

\begin{example}
Continuing Example~\ref{ex:beta}, the composition $\SF_3 \circ \Shift \circ \SF_6^{-1}$ would pair the star factorization
$$\delta = (6 \ 8) (6 \ 1) (6 \ 3) (6 \ 1) (6 \ 2) (6 \ 5) (6 \ 8) (6 \ 7) (6 \ 4) (6 \ 4) \in \star_6((13)(285)(4)(67))$$
with the star factorization
$$\delta' = (3 \ 1) (3 \ 8) (3 \ 2) (3 \ 5) (3 \ 7) (3 \ 4) (3 \ 4) (3 \ 6) (3 \ 7) (3 \ 8) \in \star_3((13)(285)(4)(67)).$$
They would be paired via $\delta \mapsto (\alpha,1,3,1,2) \mapsto (\beta,1,3,1,2) \mapsto \delta'$.
\end{example}

\section{A Bruhat-style poset on star factorizations}\label{sec:poset}

The Subword Property of the Bruhat order says that we can fix a reduced decomposition of a permutation $\pi$, and find all $\sigma \le \pi$ in the Bruhat order by deleting letters from that reduced decomposition \cite{bjorner brenti}. This suggests a new poset structure to $\symm_n$ in terms of minimal transitive star factorizations.

\begin{definition}\label{defn:bruhat-like order}
Fix positive integers $n \ge k$. For $\sigma, \pi \in \symm_n$, say that $\sigma \preceq_k \pi$ if there exists $\gamma \in \star_k(\sigma)$ that is a subword (not necessarily contiguous) of $\delta \in \star_k(\pi)$. Let $\Star_k(n)$ be the poset defined by $\preceq_k$ on $\symm_n$.
\end{definition}

Note that Definition~\ref{defn:bruhat-like order} is similar to the Subword Property of the Bruhat order, but not identical to it because it does not require the same $\delta \in \star_k(\pi)$ to work for all $\sigma\preceq_k\pi$. The poset defined by $\preceq_1$ on $\symm_3$ appears in Figure~\ref{fig:s3}. 

\begin{figure}[htbp]
$$\begin{tikzpicture}
\foreach \x in {-2,0,2} {\draw (-1,0) -- (\x,2) -- (1,0); \draw (0,4) -- (\x,2);}
\node at (-1,0) [rectangle,draw,fill=white] {$(132)$};
\node at (1,0) [rectangle,draw,fill=white] {$(123)$};
\node at (-2,2) [rectangle,draw,fill=white] {$(13)(2)$};
\node at (0,2) [rectangle,draw,fill=white] {$(1)(23)$};
\node at (2,2) [rectangle,draw,fill=white] {$(12)(3)$};
\node at (0,4) [rectangle,draw,fill=white] {$(1)(2)(3)$};
\end{tikzpicture}$$
\caption{The poset $\Star_1(3)$.}\label{fig:s3}
\end{figure}
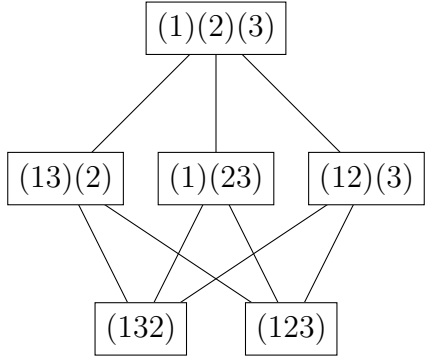

\begin{definition}\label{defn:excerpt}
Let $C = (c_1 \ \cdots \ c_t)$ be a cycle. An \emph{excerpt} $D$ of $C$ is a cycle of the form $D = (c_i \ c_{i+1} \ \cdots \ c_{i+j})$ for some positive integers $i$ and $j$, where indices are taken modulo $t$. A nonempty excerpt $D$ is \emph{proper} if $j<t$. The cycle $C$ has been \emph{sliced} if it has been partitioned into proper excerpts. 
\end{definition}

Characterization of the poset $\Star_k(n)$ follows, essentially, from Lemmas~\ref{lem:how cycles get factored} and~\ref{lem:how cycles interact}.

\begin{theorem}\label{thm:poset is cycles}
$\sigma \preceq_k \pi$ if and only if every cycle of $\pi$ is an excerpt of a cycle of $\sigma$.
\end{theorem}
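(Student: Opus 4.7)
The plan is to prove both directions of the equivalence.

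\emph{Sufficiency} ($\Leftarrow$): I will construct $\gamma \in \star_k(\sigma)$ and $\delta \in \star_k(\pi)$ with $\gamma \subseteq \delta$, by induction on $m_\pi - m_\sigma$ (the total number of slicings converting $\sigma$'s cycles into $\pi$'s). The base case $\sigma = \pi$ is trivial. For the inductive step, suppose $\pi$ is obtained from an intermediate $\tau$ by one additional slice of a $\tau$-cycle $D = (d_1\ \cdots\ d_t)$ not containing $k$ (the case $k \in D$ is analogous via Lemma~\ref{lem:how cycles get factored}(b)), splitting it between $d_r$ and $d_{r+1}$ into cycles $E_1 = (d_1\ \cdots\ d_r)$ and $E_2 = (d_{r+1}\ \cdots\ d_t)$ of $\pi$. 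Choose $\gamma_\tau$ so that, by Lemma~\ref{lem:how cycles get factored}(a) with special index $j=r$, the $D$-segment reads $(k\ d_r)(k\ d_{r-1})\cdots(k\ d_1)(k\ d_t)\cdots(k\ d_{r+1})(k\ d_r)$. Obtain $\delta_\pi$ by inserting a single copy of $(k\ d_t)$ between the final $(k\ d_{r+1})$ and $(k\ d_r)$ of this segment. The resulting $D$-segment $(k\ d_r)\cdots(k\ d_1)(k\ d_t)\cdots(k\ d_{r+1})(k\ d_t)(k\ d_r)$ interleaves the canonical factorizations of $E_1$ (special index $r$, occupying the first $r$ positions and the final position) and $E_2$ (special index $t$, occupying the middle positions) according to the word pattern $1^r 2^{t-r+1} 1^1$, which is noncrossing and hence valid by Lemma~\ref{lem:how cycles interact}. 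Manifestly $\gamma_\tau$ is a subword of $\delta_\pi$, completing the induction.

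\emph{Necessity} ($\Rightarrow$): Assume $\gamma \subseteq \delta$. Fix a cycle $C = (c_1\ \cdots\ c_t)$ of $\pi$ with $k \notin C$. By Lemma~\ref{lem:how cycles get factored}(a), the $C$-transpositions in $\delta$ form the fixed sequence $T_C = (k\ c_j)(k\ c_{j-1})\cdots(k\ c_1)(k\ c_t)\cdots(k\ c_{j+1})(k\ c_j)$. The $C$-transpositions in $\gamma$ form a subword of $T_C$; by transitivity of $\gamma$, each distinct $(k\ c_i)$ appears at least once, so the subword is all of $T_C$ or $T_C$ with one copy of $(k\ c_j)$ deleted. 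Applying Lemma~\ref{lem:how cycles get factored} to $\sigma$, each retained $(k\ c_i)$ occupies a prescribed position in the factorization sequence of $c_i$'s $\sigma$-cycle. I will show that if $C$'s elements were spread across two or more $\sigma$-cycles, the relative order of $C$-transpositions imposed by $T_C$ would, combined with the prescribed $\sigma$-cycle factorization patterns, either force a forbidden $ijij$ subword in $\omega(\gamma)$ (contradicting Definition~\ref{defn:valid words}) or break the cyclic ordering demanded by Lemma~\ref{lem:how cycles get factored}. Hence all $c_i$ belong to a single $\sigma$-cycle $D$, and matching $T_C$'s backward cyclic reading of $C$ against the prescribed factorization sequence of $D$ forces $C$'s elements to occupy a cyclic consecutive block of $D$, i.e., $C$ is an excerpt of $D$. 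The case $k \in C$ is treated analogously, using Lemma~\ref{lem:how cycles get factored}(b) in place of (a).

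The main obstacle is the necessity argument that $C$'s elements all lie in a single cycle of $\sigma$: it requires simultaneously controlling (i) the fixed $T_C$ pattern in $\delta$, (ii) the prescribed factorization patterns of each $\sigma$-cycle appearing in $\gamma$, and (iii) the global noncrossing word condition on $\omega(\gamma)$. Once that claim is in hand, the cyclic-order matching needed to extract the excerpt structure is direct.
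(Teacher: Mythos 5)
Your proposal does not arrive at a complete proof; the necessity direction contains a genuine gap, which you yourself acknowledge at the end. You reduce the problem to showing that if a cycle $C$ of $\pi$ had elements spread over two $\sigma$-cycles, then $\omega(\gamma)$ would contain a forbidden pattern or violate the cyclic order imposed by Lemma~\ref{lem:how cycles get factored} --- but this reduction is asserted, not proved, and it is the heart of the argument. The paper sidesteps this interleaving analysis with a more direct necessity argument: since $\gamma$ and $\delta$ are both minimal transitive star factorizations and $\gamma$ is a subword of $\delta$, every deleted transposition must be a duplicated one in $\delta$ (by Lemma~\ref{lem:how cycles get factored}). The paper then traces what deleting one such duplicated $(k\ a_j)$ does to the product, showing explicitly that the cycle $C_i$ containing $a_j$ merges either with the cycle $C_q$ whose factors most tightly enclose the $C_i$-segment, or with the pivot's cycle $C_p$ --- and in each case $C_i$'s elements are inserted as a contiguous cyclic block. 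Iterating over the deleted duplicates then yields that each cycle of $\pi$ sits as an excerpt inside a cycle of $\sigma$. This deletion-tracking approach makes the excerpt structure appear constructively rather than through a contradiction about forbidden words.

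Your sufficiency direction also has an unresolved setup issue. The induction on $m_\pi-m_\sigma$ gives you \emph{some} $\delta_\tau\in\star_k(\tau)$ containing $\gamma$ as a subword, but you then need $\delta_\tau$'s $D$-segment to have its duplicated index exactly at $j=r$, the slicing point --- nothing in the inductive hypothesis arranges this, so you are implicitly re-choosing the whole chain of factorizations without saying so. You also only verify that the local pattern $1^r2^{t-r+1}1$ is noncrossing, without addressing how cycles that were nested inside the $D$-segment interact with the inserted factor. The paper's sufficiency argument avoids all of this by constructing $\gamma$ and $\delta$ in one shot: orient each cycle of $\sigma$ and $\pi$ so that erasing all parentheses yields the same underlying word, then apply Lemma~\ref{lem:how cycles get factored} to each cycle left-to-right; with that compatible orientation, $\gamma$ is automatically a subword of $\delta$, no induction required.
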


\begin{proof}
Suppose that $\sigma \preceq_k \pi$, with $\gamma$ and $\delta$ as in Definition~\ref{defn:bruhat-like order}. Because these are minimal \emph{transitive} star factorizations, any transposition that gets deleted from $\delta$ to form $\gamma$ must have been duplicated in $\delta$, by Lemma~\ref{lem:how cycles get factored}. Using the notation of that lemma, let $(k \ a_j)$ be a transposition deleted from $\delta$ to form $\gamma$. Suppose first that the factors from the $C_i$ containing $a_j$ had appeared between two factors from some other $C_q = (\cdots xy \cdots)$ in $\delta$, as
$$(k \ y) \cdots (k \ a_j) (k \ a_{j-1}) \cdots (k \ a_1)(k \ a_t) \cdots (k \ a_{j+1}) (k \ a_j) \cdots (k \ x),$$
and let $x$ and $y$ be the nearest neighbors with this property. Then deleting either copy of $(k \ a_j)$ will merge the two cycles, forming
$$(\cdots x \ a_j \ a_{j+1} \cdots a_t \ a_1 \cdots a_{j-1} \ y\cdots) \hspace{.25in} \text{or} \hspace{.25in}
(\cdots x \ a_{j+1} \cdots a_t \ a_1 \cdots a_{j-1} \ a_j \ y\cdots),$$
depending on which $(k \ a_j)$ factor was deleted. 
On the other hand, if the factors from this $C_i$ do not appear between two factors from any other $C_q$, then $C_i$ merges with the cycle $C_p$ containing the pivot value, as indicated by Lemma~\ref{lem:how cycles get factored}.

Now suppose that every cycle of $\pi$ is an excerpt of a cycle of $\sigma$. Orient the cycles in each permutation so that if the parentheses are deleted then the two resulting words are identical. (For a small example in $\symm_3$, we could write $\pi = (1)(32)$ and $\sigma = (132)$.) Construct $\gamma \in \star_k(\sigma)$ using Lemma~\ref{lem:how cycles get factored}, factoring each cycle from left to right. Do likewise to construct $\delta \in \star_k(\pi)$. This $\gamma$ is necessarily a subword of $\delta$, and so $\sigma \preceq_k \pi$.
\end{proof}

Reminiscent of the work of \cite{irving rattan} and the previous section, Theorem~\ref{thm:poset is cycles} gives another situation in which the pivot value itself does not matter.

\begin{corollary}\label{cor:poset pivot independence}
For all $k, k' \in [n]$, $\Star_k(n) = \Star_{k'}(n)$.
\end{corollary}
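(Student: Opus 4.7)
The plan is to observe that Corollary~\ref{cor:poset pivot independence} is an immediate consequence of Theorem~\ref{thm:poset is cycles}, since that theorem provides a characterization of the relation $\preceq_k$ purely in terms of the cycle structures of $\sigma$ and $\pi$, with no reference whatsoever to the pivot $k$.

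More precisely, I would fix two pivots $k, k' \in [n]$ and two permutations $\sigma, \pi \in \symm_n$. By Theorem~\ref{thm:poset is cycles}, the relation $\sigma \preceq_k \pi$ holds if and only if every cycle of $\pi$ is an excerpt of a cycle of $\sigma$. Applying the same theorem with pivot $k'$, the relation $\sigma \preceq_{k'} \pi$ holds if and only if every cycle of $\pi$ is an excerpt of a cycle of $\sigma$. The right-hand conditions in these two biconditionals are literally identical and involve only the cycle decompositions of $\sigma$ and $\pi$, so $\sigma \preceq_k \pi$ if and only if $\sigma \preceq_{k'} \pi$. Since the two posets have the same underlying set $\symm_n$ and the same order relation, they are equal as posets.

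There is essentially no obstacle here: all of the substantive work has been carried out in Theorem~\ref{thm:poset is cycles}, which uses Lemmas~\ref{lem:how cycles get factored} and~\ref{lem:how cycles interact} to translate the subword condition on star factorizations into the excerpt condition on cycles. The only thing to verify in the present statement is that the cycle-excerpt condition is manifestly pivot-free, which is immediate from inspection of Definition~\ref{defn:excerpt}. Thus the proof reduces to a one-line appeal to Theorem~\ref{thm:poset is cycles}.
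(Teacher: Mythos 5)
Your proposal is correct and matches the paper's reasoning exactly: the corollary is presented as an immediate consequence of Theorem~\ref{thm:poset is cycles}, whose characterization of $\preceq_k$ via the excerpt condition makes no reference to the pivot, so the order relations coincide for any two pivots. Nothing further is needed.
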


In other words, we can call this poset
$$\Star(n)$$
and not specify the pivot $k$. Accordingly, we can write $\preceq$ for $\preceq_k$. The ordering characterization given in Theorem~\ref{thm:poset is cycles} allows us to describe the structure of this poset in detail. For example, the identity $(1)(2) \cdots (n)$ is the unique maximal element, and there are $(n-1)!$ minimal elements: the $n$-cycles.

\begin{lemma}\label{lem:single sliced cycle}
Fix $\sigma \preceq \pi$ in $\Star(n)$. Suppose that exactly one cycle of $\sigma$ has been sliced to form $\pi$, and that it resulted in $d$ proper excerpts in $\pi$. Then $[\sigma,\pi]$ is isomorphic to the noncrossing partition lattice $\NC(d)$.
\end{lemma}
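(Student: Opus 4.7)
The plan is to use Theorem~\ref{thm:poset is cycles} to describe the interval $[\sigma,\pi]$ as a lattice of partitions of the index set of $\pi$'s excerpts within the sliced cycle, and then to identify that lattice with $\NC(d)$.

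Let $C$ be the unique cycle of $\sigma$ that is sliced, and let $E_1,\ldots,E_d$ be the resulting proper excerpts of $\pi$, labeled in cyclic order along $C$. By Theorem~\ref{thm:poset is cycles}, $\tau \in [\sigma,\pi]$ iff every cycle of $\tau$ is an excerpt of a cycle of $\sigma$ and every cycle of $\pi$ is an excerpt of a cycle of $\tau$. Applied to a non-$C$ cycle $D$ of $\sigma$, the unique $\tau$-cycle whose elements meet $D$ must simultaneously be an excerpt of $D$ and contain $D$ as an excerpt, forcing it to equal $D$. Hence all non-$C$ cycles are shared among $\sigma$, $\tau$, and $\pi$, and $\tau$ is determined by its behavior on $C$'s elements.

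Next, the cycles of $\tau$ supported on $C$ partition $C$'s elements into excerpts (contiguous cyclic arcs) of $C$, and each $E_i$ sits as a sub-arc inside exactly one such $\tau$-cycle. This produces a map $\tau \mapsto \Pi(\tau)$, where $\Pi(\tau)$ is the partition of $\{1,\ldots,d\}$ whose blocks record which $E_i$'s share a common $\tau$-cycle. Refinement of $\Pi(\tau)$ matches $\preceq$ on the interval. To invert, I would send a noncrossing partition $\rho$ of $[d]$ to the permutation whose $C$-portion has one cycle per block of $\rho$, each cycle obtained by taking the union of the corresponding $E_i$'s with the cyclic order inherited from $C$, while leaving the other cycles as in $\sigma$.

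I would then check that $\Pi$ is a lattice isomorphism onto $\NC(d)$, matching the coarsest partition to $\sigma$ and the all-singletons partition to $\pi$. The main obstacle I expect is surjectivity: for a general noncrossing partition of $[d]$, the constructed $\tau$ must satisfy both conditions of Theorem~\ref{thm:poset is cycles}, meaning each block yields a genuine excerpt of $C$ and the nested structure of blocks translates into a consistent cycle structure. I would handle this by inducting on the nesting depth of the noncrossing partition, peeling off outermost blocks one at a time, and verifying that each peeling step preserves membership in $[\sigma,\pi]$ and commutes with refinement.
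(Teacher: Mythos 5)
Your map $\Pi$ is essentially the paper's map $\phi$, and both proofs reduce to understanding the interval $[(12\cdots d),(1)(2)\cdots(d)]$ and matching its elements with noncrossing partitions of $[d]$. You rightly identify surjectivity as the obstacle, but the ``peel off outermost blocks'' plan cannot close it, because $\Pi$ is genuinely not surjective. By Theorem~\ref{thm:poset is cycles}, every cycle of $\tau \in [\sigma,\pi]$ that meets $C$ must be an excerpt of $C$, i.e.\ a \emph{contiguous} cyclic arc; hence every block of $\Pi(\tau)$ is a cyclic interval of $[d]$, and the image of $\Pi$ is the set of cyclic interval partitions of $[d]$, not all of $\NC(d)$. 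Concretely, take $d = 4$, $\sigma = (1234)$, $\pi = (1)(2)(3)(4)$: the noncrossing partitions $\{1,3\}\{2\}\{4\}$ and $\{2,4\}\{1\}\{3\}$ are not attained, and indeed the interval has only $1+4+2+4+1=12$ elements, while $|\NC(4)|=14$. Your proposed inverse exposes the same failure: forming a single cycle from the block $\{1,3\}$ ``with the cyclic order inherited from $C$'' produces $(13)$, which is not an excerpt of $(1234)$, so the resulting permutation does not satisfy $\sigma \preceq \tau$.

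Worth flagging: the paper's own proof asserts that $\phi$ is a bijection $\mathcal{P} \to \NC(d)$ without establishing surjectivity, and it fails at exactly the same point; the interval is the lattice of cyclic interval partitions of $[d]$ (of size $2^d - d$ for $d\ge 2$), which coincides with $\NC(d)$ only for $d \le 3$. So the gap you hit is not an artifact of your route --- it reflects a problem with the statement as written. If you want a correct statement to prove, replace $\NC(d)$ by the poset of cyclic interval partitions of $[d]$ (equivalently, the boolean lattice of subsets of $d$ gaps with subsets of size one identified with the empty set), and then your $\Pi$ really is an order isomorphism; alternatively, check whether the intended hypothesis forces $d \le 3$.
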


\begin{proof}
Let the disjoint cycles of $\sigma$ be $C_1, \ldots, C_m$, and suppose that $C_m$ has been sliced into proper excerpts $X_1, \ldots, X_d$ in $\pi$. By Theorem~\ref{thm:poset is cycles}, all elements of $[\sigma,\pi]$ have the form
$$C_1\cdots C_{m-1} Y_1\cdots Y_{d'},$$
where $C_m$ can be sliced into $Y_1\cdots Y_{d'}$, and the cycles $Y_1\cdots Y_{d'}$ can subsequently be sliced to produce $X_1\cdots X_d$. Therefore, in fact, we have
$$[\sigma,\pi]\cong [(12\cdots d),(1)(2)\cdots(d)] \subset \Star(d).$$

We will now show that $\mathcal{P} := [(12\cdots d),(1)(2)\cdots(d)]$ is isomorphic to the dual of $\NC(d)$. The poset $\NC(d)$ is self-dual, which will complete the proof.

The elements of $\mathcal{P}$ are in bijection with elements of $\NC(d)$, where the cycles of $Z \in \mathcal{P}$ give the corresponding partition $\phi(Z)$ of $[d]$. The correspondence between order relations follows from Theorem~\ref{thm:poset is cycles}. That is, if $Z \lessdot V$ in $\mathcal{P}$, then $V$ and $Z$ have all the same cycles, except for one cycle $Z_i$ in $Z$ that has been sliced into $V_1$ and $V_2$ in $V$. By Definition~\ref{defn:excerpt}, this amounts to partitioning the part of $\phi(Z)$ corresponding to $Z_i$ into two noncrossing pieces as defined by $V_1$ and $V_2$; in other words, $\phi(V)$ is covered by $\phi(Z)$ in $\NC(d)$. Similarly, noncrossing partitions $A$ covering $B$ in $\NC(d)$ correspond to $\phi^{-1}(A) \lessdot \phi^{-1}(B)$ in $\mathcal{P}$.

Therefore $[\sigma,\pi] \cong [(12\cdots d),(1)(2)\cdots(d)] \cong (\NC(d))^* \cong \NC(d)$.
\end{proof}

In \cite{brady}, Brady considered a poset similar to the one we study here. More precisely, his poset is dual to our interval $[(12\cdots d),(1)(2)\cdots(d)]$. The novelty that we bring, then, is not the poset itself but rather that its order relation can be realized through star transpositions.

\begin{corollary}\label{cor:intervals are products of NC}
Suppose that $\sigma \preceq \pi$ in $\Star(n)$, that the cycles of $\sigma$ that get sliced are $C_1, \ldots, C_s$ (meaning that all other cycles of $\sigma$ appear identically in $\pi$), and that $C_i$ results in $d_i$ proper excerpts in $\pi$. Then $[\sigma,\pi] \cong \NC(d_1) \times \cdots \times \NC(d_s)$.
\end{corollary}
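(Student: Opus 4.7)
The plan is to reduce Corollary~\ref{cor:intervals are products of NC} to Lemma~\ref{lem:single sliced cycle} via a product decomposition of the interval $[\sigma, \pi]$ indexed by the sliced cycles $C_1, \ldots, C_s$. The only tools needed beyond that lemma are Theorem~\ref{thm:poset is cycles}, which characterizes $\preceq$ purely in terms of excerpt-refinement of cycles, and the disjointness of the supports of the $C_i$.

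First I would verify that any $\tau \in [\sigma, \pi]$ must leave every non-sliced cycle of $\sigma$ untouched. Suppose $C$ is a cycle of $\sigma$ that already appears intact as a cycle of $\pi$. Then $\sigma \preceq \tau$ forces every cycle of $\tau$ supported on $C$ to be an excerpt of $C$, while $\tau \preceq \pi$ forces $C$ itself (as a cycle of $\pi$) to be an excerpt of some cycle of $\tau$. Any excerpt of a cycle of $\tau$ lying inside $C$ uses only a subset of $C$'s elements, so containing all of $C$ forces $C$ to appear verbatim as a cycle of $\tau$.

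Next I would exploit the disjointness of the supports of $C_1, \ldots, C_s$. Combined with the previous step, this yields a natural bijection
$$[\sigma, \pi] \; \longleftrightarrow \; \prod_{i=1}^{s} [\sigma_i, \pi_i],$$
where $\sigma_i$ is the permutation whose only nontrivial cycle is $C_i$ and $\pi_i$ is the slicing of $C_i$ into $X_{i,1}, \ldots, X_{i,d_i}$, both viewed on the support of $C_i$. Checking both directions is immediate from Theorem~\ref{thm:poset is cycles}: a tuple $(\tau_1, \ldots, \tau_s)$ with $\tau_i \in [\sigma_i, \pi_i]$ glues with the fixed non-sliced cycles of $\sigma$ to produce a valid $\tau \in [\sigma, \pi]$, and conversely. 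The same cycle-refinement characterization promotes the bijection to a poset isomorphism, since $\tau \preceq \tau'$ in $[\sigma, \pi]$ is equivalent to $\tau_i \preceq \tau'_i$ in each factor.

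Finally, each factor $[\sigma_i, \pi_i]$ is precisely the single-sliced-cycle interval of Lemma~\ref{lem:single sliced cycle}, hence isomorphic to $\NC(d_i)$, and multiplying gives $[\sigma, \pi] \cong \NC(d_1) \times \cdots \times \NC(d_s)$. The main (modest) obstacle lies in the first step, namely the rigidity argument ruling out an intermediate $\tau$ that secretly alters a cycle of $\sigma$ absent from the slicing data; once that is in hand, the product decomposition and the invocation of Lemma~\ref{lem:single sliced cycle} are routine.
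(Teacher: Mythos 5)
Your proof is correct and takes essentially the same approach as the paper, which simply invokes Lemma~\ref{lem:single sliced cycle} together with the observation that disjoint cycles act independently. You have merely unpacked that one-sentence argument, in particular making explicit the rigidity step showing that no $\tau\in[\sigma,\pi]$ can alter a non-sliced cycle of $\sigma$, which the paper leaves implicit.
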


\begin{proof}
This follows from Lemma~\ref{lem:single sliced cycle} and the fact that disjoint cycles act independently.
\end{proof}

\begin{example}
The interval $[(12345)(678),(15)(23)(4)(67)(8)]$ is isomorphic to $\NC(3)\times \NC(2)$, and is depicted in Figure~\ref{fig:interval example}.
\begin{figure}[htbp]
$$\begin{tikzpicture}
\foreach \x in {-6,-2,2,6} {\draw (0,0) -- (\x,2); \draw (0,6) -- (\x,4);}
\foreach \x in {-6,-2,2} {\draw (-6,4) -- (\x,2);}
\foreach \x in {-2,2,6} {\draw (6,2) -- (\x,4);}
\draw (-6,2) -- (-2,4);
\draw (-2,2) -- (2,4);
\draw (2,2) -- (6,4);
\node at (0,0) [rectangle,draw,fill=white] {$(12345)(678)$};
\node at (-6,2) [rectangle,draw,fill=white] {$(1235)(4)(678)$};
\node at (-2,2) [rectangle,draw,fill=white] {$(145)(23)(678)$};
\node at (2,2) [rectangle,draw,fill=white] {$(15)(234)(678)$};
\node at (6,2) [rectangle,draw,fill=white] {$(12345)(67)(8)$};
\node at (-6,4) [rectangle,draw,fill=white] {$(15)(23)(4)(678)$};
\node at (-2,4) [rectangle,draw,fill=white] {$(1235)(4)(67)(8)$};
\node at (2,4) [rectangle,draw,fill=white] {$(145)(23)(67)(8)$};
\node at (6,4) [rectangle,draw,fill=white] {$(15)(234)(67)(8)$};
\node at (0,6) [rectangle,draw,fill=white] {$(15)(23)(4)(67)(8)$};
\end{tikzpicture}$$
\caption{The interval $[(12345)(678),(15)(23)(4)(67)(8)]\subset\Star(8)$.}\label{fig:interval example}
\end{figure}
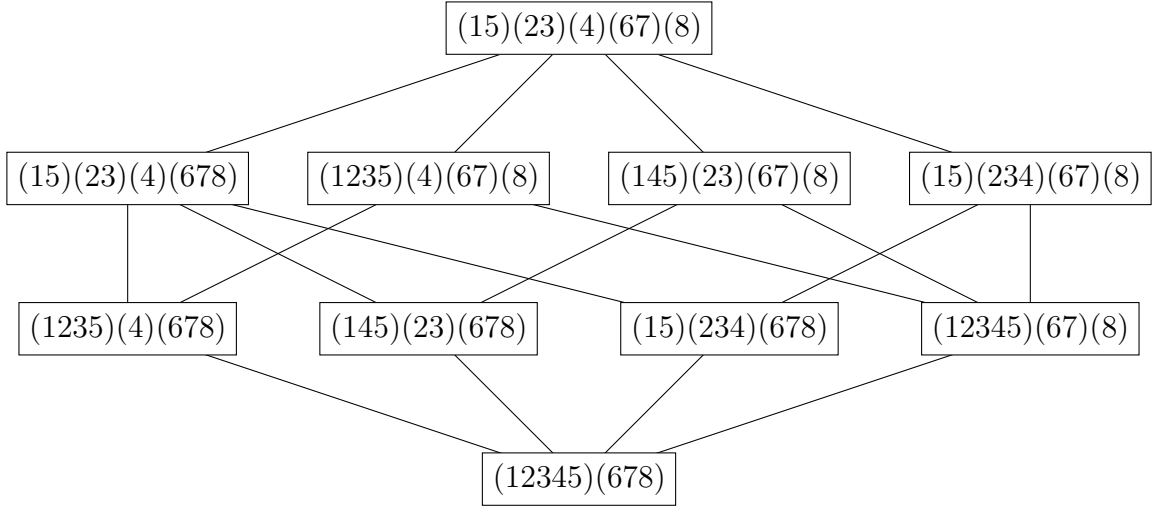
\end{example}

Suppose that $\pi \in \symm_n$ consists of $m$ disjoint cycles, $C_1, \ldots, C_m$, where $C_i$ has length $\ell_i$. Additional conclusions we can draw from Theorem~\ref{thm:poset is cycles} include:
\begin{itemize}
\item $\Star(n)$ is not \emph{bounded} because it has multiple ($(n-1)!$) minimal elements;
\item $\Star(n)$ is \emph{graded} because all maximal chains have $n$ elements;
\item as illustrated in Figure~\ref{fig:s3}, two elements of $\Star(n)$ may not have a greatest lower bound (nor, even, a common lower bound) and thus $\Star(n)$ is not a lattice; 
\item $\pi$ covers $\sum_{i<j}^m \ell_i \ell_j$ elements, formed by picking orientations of any two cycles and merging them in those orientations;
\item $\pi$ is covered by $\sum_i \binom{\ell_i}{2}$ elements, because a cycle of $\ell$ elements can be sliced into two proper excerpts by choosing two elements to serve as the first letters in each excerpt.
\end{itemize}

In previous work, we studied boolean elements in the Bruhat order \cite{claesson kitaev ragnarsson tenner, ragnarsson tenner 1, ragnarsson tenner 2, tenner patt-bru}. These had such interesting properties that we are motivated to look for boolean intervals in $\Star(n)$, as well. Note that this is a more general question than was studied for the Bruhat order, since we allow arbitrary intervals to be boolean, not just principal order ideals.

\begin{definition}
An interval $[\sigma,\pi]$ is \emph{boolean} if it is isomorphic to a boolean algebra.
\end{definition}

Due to Lemma~\ref{lem:single sliced cycle}, a study of boolean intervals in $\Star(n)$ amounts to understanding when the Catalan number $C_n$ is equal to $2^{n-1}$, and this happens only when $n \in \{1,2\}$. This and Corollary~\ref{cor:intervals are products of NC} characterize boolean intervals.

\begin{corollary}\label{cor:boolean intervals}
The interval $[\sigma,\pi]$ is boolean if and only if each cycle of $\sigma$ is sliced into at most two excerpts to form $\pi$.
\end{corollary}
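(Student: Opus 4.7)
The plan is to reduce Corollary~\ref{cor:boolean intervals} to determining which lattices $\NC(d)$ are boolean, using Corollary~\ref{cor:intervals are products of NC}. That corollary asserts $[\sigma,\pi] \cong \NC(d_1)\times\cdots\times\NC(d_s)$, where $d_i \ge 2$ counts the proper excerpts produced from each sliced cycle $C_i$. A finite product of posets is a boolean algebra if and only if each factor is a boolean algebra (the product of $B_{a_1},\ldots,B_{a_s}$ is $B_{a_1+\cdots+a_s}$), so the whole problem reduces to pinning down the $d$ for which $\NC(d)$ is boolean.

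To handle that reduction, I would compare ranks and cardinalities. The lattice $\NC(d)$ is graded of rank $d-1$, because each cover relation refines a block into two noncrossing subblocks, and one needs exactly $d-1$ such refinements to pass from the one-block partition to the partition into singletons. Hence if $\NC(d)$ were boolean it would have to be $B_{d-1}$, with exactly $2^{d-1}$ elements. But $|\NC(d)|=C_d$, the $d$th Catalan number, so being boolean forces $C_d=2^{d-1}$. Direct inspection gives $C_1=1=2^0$ and $C_2=2=2^1$, while $C_3=5>4$; a quick induction using the ratio $C_{d+1}/C_d=2(2d+1)/(d+2)>2$ for $d\ge 2$ shows $C_d>2^{d-1}$ for all $d\ge 3$. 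Conversely, $\NC(1)$ is a point ($=B_0$) and $\NC(2)$ is a two-element chain ($=B_1$), both of which are boolean.

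Combining the two steps: $[\sigma,\pi]$ is boolean if and only if $d_i\in\{1,2\}$ for every $i$. Since the $d_i$ in the product are by definition $\ge 2$, this says every sliced cycle produces exactly two proper excerpts; all unsliced cycles contribute the trivial factor $\NC(1)$ and so impose no condition. That is precisely the statement of the corollary. The only genuinely non-routine point is confirming that $\NC(d)$ is boolean only for $d\le 2$, and the discussion immediately preceding the corollary already records this as the content of the Catalan identity $C_d=2^{d-1}$; there is no substantive obstacle beyond that cardinality check and the rank observation that pins down which boolean algebra $\NC(d)$ would have to be.
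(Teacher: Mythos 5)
Your proposal is correct and follows essentially the same route as the paper: reduce via Corollary~\ref{cor:intervals are products of NC} to asking when $\NC(d)$ is boolean, and observe that this forces $C_d = 2^{d-1}$, which holds only for $d \in \{1,2\}$. The extra details you supply (the rank computation, the ratio estimate $C_{d+1}/C_d > 2$, and the check that a factor of a boolean product is boolean) merely flesh out the one-line justification the paper gives before the corollary.
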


This characterization allows us to enumerate several interesting things: 
\begin{itemize}
\item the number of boolean intervals with $\pi$ as maximal element is
$$\sum_{\substack{\text{involutions }\alpha \in \symm_m\\ i<j \text{ with } \alpha(i) = j}} \ell_i\ell_j;$$
\item the number of boolean intervals with $\pi$ as minimal element is
$$\prod_i\left(1 + \binom{\ell_i}{2}\right).$$
\end{itemize}
Thus, for example, the number of boolean intervals of the form $[\sigma,(1)(2)\cdots(n)]$ is the number of involutions in $\symm_n$, which is sequence A000085 of \cite{oeis}, and there are $1 + 3\cdot 1 = 4$ boolean intervals of the form $[\sigma,(123)(4)]$:
$$[(123)(4),(123)(4)] , \ 
[(1423),(123)(4)] , \
[(1243),(123)(4)] , \text{ and }
[(1234),(123)(4)].$$

\section{Future research}\label{sec:future}

Two main avenues for future research emerge from this work. The first it to establish how else star factorizations and noncrossing partitions interact, and to determine if that interaction can be leveraged in some way. Secondly, several of the results above exhibited a property of star factorizations that was independent of the pivot value. Thus we ask, how else can this pivot independence arise, and what does it mean in those settings?

\section{Acknowledgements}

I am grateful for the precious time and thoughtful suggestions given by an anonymous referee.

\end{document}